\tikzset{main node/.style={circle,draw,minimum size=1cm,inner sep=0pt},}
\tikzstyle{container} = [draw, rectangle, inner sep=0.1cm]
\tikzstyle{container-ellipse} = [draw, ellipse, inner sep=0.1cm]
\newcommand{\jie}[1]{\textcolor{blue}{JF: #1}}
\definecolor{darkgreen}{rgb}{0,0.5,0}
\newcommand*{\da@rightarrow}{\mathchar"0\hexnumber@\symAMSa 4B }
\newcommand*{\da@leftarrow}{\mathchar"0\hexnumber@\symAMSa 4C }
\newcommand*{\xdashrightarrow}[2][]{%
  \mathrel{%
    \mathpalette{\da@xarrow{#1}{#2}{}\da@rightarrow{\,}{}}{}%
  }%
}
\newcommand{\xdashleftarrow}[2][]{%
  \mathrel{%
    \mathpalette{\da@xarrow{#1}{#2}\da@leftarrow{}{}{\,}}{}%
  }%
}
\newcommand*{\da@xarrow}[7]{%
  % #1: below
  % #2: above
  % #3: arrow left
  % #4: arrow right
  % #5: space left 
  % #6: space right
  % #7: math style 
  \sbox0{$\ifx#7\scriptstyle\scriptscriptstyle\else\scriptstyle\fi#5#1#6\m@th$}%
  \sbox2{$\ifx#7\scriptstyle\scriptscriptstyle\else\scriptstyle\fi#5#2#6\m@th$}%
  \sbox4{$#7\dabar@\m@th$}%
  \dimen@=\wd0 %
  \ifdim\wd2 >\dimen@
    \dimen@=\wd2 %   
  \fi
  \count@=2 %
  \def\da@bars{\dabar@\dabar@}%
  \@whiledim\count@\wd4<\dimen@\do{%
    \advance\count@\@ne
    \expandafter\def\expandafter\da@bars\expandafter{%
      \da@bars
      \dabar@ 
    }%
  }%  
  \mathrel{#3}%
  \mathrel{%   
    \mathop{\da@bars}\limits
    \ifx\\#1\\%
    \else
      _{\copy0}%
    \fi
    \ifx\\#2\\%
    \else
      ^{\copy2}%
    \fi
  }%   
  \mathrel{#4}%
}
\newcommand{\edit}[1]{#1}
\newcommand{\ie}{\textit{i.e.}}
\newcommand{\senseAct}{\Sigma}
\newcommand{\ctrlAct}{A}
\newcommand{\attackAct}{\mathcal{B}}
\newcommand{\calO}{\mathcal{O}}
\newcommand{\obs}{\mathsf{Obs}}
\newcommand{\dist}{\mathcal{D}}
\newcommand{\act}{\mathcal{A}}
\newcommand{\sense}{\sigma}
\newcommand{\att}{\beta}
\newcommand{\abs}[1]{\lvert#1\rvert}
\newcommand{\supp}{\mathsf{Supp}}
\newcommand{\plays}{\mathsf{Plays}}
\newcommand{\post}{\mathsf{Post}}
\newcommand{\prefplays}{\mathsf{Prefs}}
\newcommand{\win}{\mathsf{Win}}
\newcommand{\asw}{\mathsf{Win}^{=1}}
\newcommand{\poswin}{\mathsf{Win}^{>0}}
\newcommand{\allow}{\mathsf{Allow}}
\newcommand{\prog}{\mathsf{Prog}}
\newcommand{\calG}{\mathcal{G}}
\newtheorem{theorem}{Theorem} 
\newtheorem{definition}{Definition}
\newtheorem{problem}{Problem}
\newtheorem{remark}{Remark}
\newtheorem{lemma}{Lemma}
\acrodef{pomdp}[POMDP]{partially observable Markov decision process}
\acrodef{mdp}[MDP]{Markov decision process}
\acrodef{asw}[ASW]{Almost-Sure Winning}
\acrodef{cps}[CPS]{Cyber-Physical Systems}
\title{Qualitative Planning in Imperfect Information Games with Active Sensing and Reactive Sensor Attacks}
\title{Synthesizing Attack-Aware     Control and Active   Sensing Strategies under Reactive Sensor Attacks}
\author{Sumukha Udupa, Abhishek N. Kulkarni, Shuo Han, Nandi O. Leslie, Charles A. Kamhoua, and Jie Fu$^\ast$
\thanks{S. Udupa, A. Kulkarni and J. Fu ($^\ast$ corresponding author) are with the Dept. of Electrical and Computer Engineering, University of Florida, Gainesville, Fl 32611 USA.
{\tt\small \{sudupa, ankulkarni,jfu2\}@ufl.edu}}
\thanks{S. Han is with the Department of Electrical and Computer Engineering, University of Illinois, Chicago, IL 60607.
{\tt\small hanshuo@uic.edu}}
\thanks{N. Leslie is with Raytheon Technologies. 
{\tt\small nandi.o.leslie@raytheon.com}}
\thanks{C. Kamhoua is with U.S. Army
Research Laboratory. 
{\tt\small charles.a.kamhoua.civ@mail.mil}}
}
\begin{document}

\maketitle
\thispagestyle{empty}
\pagestyle{empty}

\begin{abstract}
We consider the probabilistic planning problem for a defender (P1) who can jointly query the sensors and take  control actions to reach a set of goal states while being aware of possible sensor attacks by an adversary (P2) who has perfect observations.
To synthesize a provably-correct, attack-aware \edit{joint} control and active sensing strategy for P1, we  construct a stochastic game on graph \edit{with augmented states that} include the actual game state \edit{(known only to the attacker)}, the belief of the defender about the game state (constructed by the attacker 
\edit{based on his knowledge of defender's observations)}.
% and the true game state}. 
We present an algorithm to \edit{compute} a belief-based, randomized strategy for P1 to ensure satisfying the reachability  objective with probability one, under the worst-case sensor attacks carried out by an informed P2. \edit{We prove the correctness of the algorithm and illustrate it using an example}.
\end{abstract}

\section{Introduction}
In this work, we develop  a formal methods based approach \edit{to synthesize}  provably correct  attack-aware cyber-physical systems (CPSs), featured by strategic interactions between a controller/defender and an attacker who carries out sensor attacks to the system. We address the following question: Given the objective of reaching a subset of states in the system, how does one   plan the defender's control actions and active information acquisition in order to satisfy the objective with probability one, under the worst-case sensor attack strategy?

 \edit{
As a motivating example, consider Fig.~\ref{fig:Example1}, where a UAV must reach the flag before its battery is depleted.  When the UAV encounters a cloud, it stops moving forward until the cloud moves away. The cloud moves randomly.
To complete the task, the UAV deploys a network of sensors to detect the cloud's location. 
An adversary may attack the sensors to prevent the UAV from accomplishing its mission.
Examples of such adversarial interactions include security patrolling robots or search and rescue in a contested environment.}

\begin{figure}
    \centering
    \includegraphics[scale=0.20]{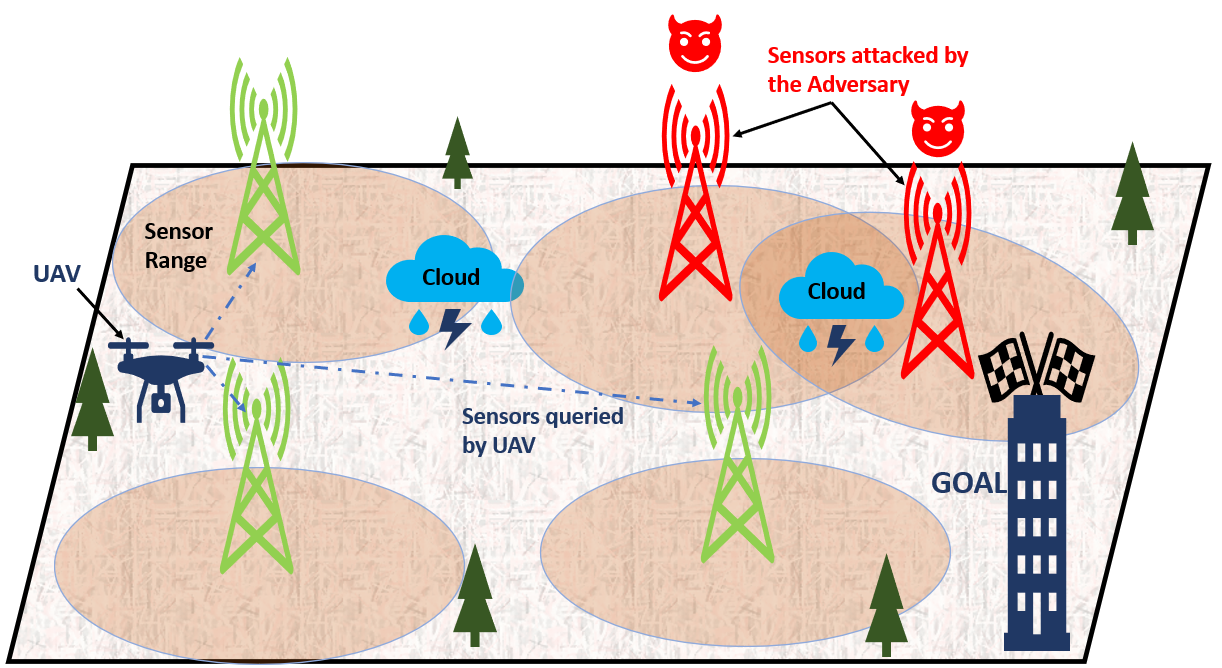}
    \caption{The strategic interaction between a controller and an adversary.}
    \vspace{-2em}
    \label{fig:Example1}
\end{figure}

 We model the interaction between the defender and the attacker as a partially observable \emph{stochastic} system   where the defender's observation  is partially controlled by the attacker: At each time step, the defender can choose what sensors to query and control actions to take, and the attacker can choose what sensors to attack. The defender receives observations from the uncompromised sensors and aims to reach a set of \emph{goal states}. 
In our previous work \cite{kulkarniQualitativePlanningImperfect2021a}, 
we analyzed  the cost of attack-unaware control where the defender mistakes the compromised sensors as probabilistic sensor failures.  
This work investigates  the synthesis of \emph{attack-aware controllers with active perception and control}. The key insight is that by knowing which sensors are susceptible to attacks, the defender can selectively choose which sensors to query \emph{in anticipation of the attacker's best response}. Our solution assumes  the worst-case scenario of asymmetric information: The attacker observes  the  state and the defender's action before  deciding  which  sensors  to  attack. Under such a worst-case attacker's information, the  attack-aware controller, if \edit{it exists},   can  provide  a  strong  guarantee  of  the  correctness of  the closed-loop system.

\edit{
\textbf{Our Approach and Contributions. } First, we model the adversarial sensor attacks with a new class of turn-based, one-sided partially observable stochastic games (POSGs), in which the observation function is dynamically changing and jointly determined by the defender and the attacker. %This class of games captures security control in cyber-physical systems subject to sensor attacks.
% the subset of sensors that the defender chooses to query, and the subset of queried sensors that the attacker chooses to compromise. 
%\editcheck{Since most existing POSG models (e.g., \cite{raskin2007algorithms}) associate a unique observation to every state, our model can capture more complex scenarios.}
Second, we construct an augmented game  in which  a state includes  the actual game state (known by the attacker), the belief of the defender  about the game state (constructed by the attacker \edit{who knows the defender's observation and the actual state)}. We develop an algorithm to solve a belief-based \ac{asw} strategy for the defender in the augmented game and prove that this strategy ensures that the control objective  in the original system can be satisfied with probability one, regardless of any sensor attack strategy. The problem is EXPTIME-complete, which matches the lower-bound complexity for one-sided POSG with a fixed observation function \cite{chatterjeePartialObservationStochasticGames2012} \edit{(see also the survey on stochastic games on graphs \cite{chatterjee2012survey})}. %Note that our model is different from  POSGs (see survey \cite{chatterjee2012survey}) in that the observation function is controlled by both the defender and the attacker, instead of a fixed function.
%the observation function is fixed, while in our model, 
% and prove that such a belief-based strategy ensures the control objective to be satisfied with probability one, despite all possible sensor attacks.
}
% 
% \edit{The \textbf{contributions} of this work are listed as follows:
% \begin{itemize}
%     \item We introduce an augmented game in which  the state  includes the actual game state (known by the attacker), the belief of the defender about the game state (constructed by the attacker \edit{given the attacker's information about the defender's information)} \editcheck{Not novelty. Condon's/Chatterjee's paper uses $(s, B)$ formulation.}
%     \item We develop an algorithm to solve a belief-based almost-sure winning strategy  in the augmented stochastic game and prove that such a belief-based strategy ensures the control objective to be satisfied with probability one, despite all possible sensor attacks.
% \end{itemize}}

% We illustrate the solution with an example that illustrates how the strategy attack-aware defender changes when attacker's capabilities change. 

\textbf{Related Work. }
% the following study how to design attacks 
Our work is closely related to  supervisory control of discrete event systems under sensor/actuator attacks. In the literature of supervisory control,
the authors~\cite{wangSupervisoryControlDiscrete2019} studied  various sensor and actuator attacks, including  replacement, injection, deletion and replay of  observable and controllable events  to a discrete event system (DES) as a transducer and investigated the controllability of the system and the design of attack-resilient supervisory controllers. 
Sensor deception  attacks have been studied in 
% \cite{meira-goesSynthesisSensorDeception2019,meira-goesSynthesisSensorDeception2020c}  
\cite{meira-goesSynthesisSensorDeception2019,meira2020synthesis}  
 from the attacker's perspective. The goal is to synthesize a sensor deception attack strategy which misleads the system to reach unsafe states. Given that the system is modeled as a probabilistic automata, the authors in \cite{meira-goesSynthesisSensorDeception2019} proposed to construct a $1\frac{1}{2}$ stochastic graph game, also known as an \ac{mdp},  and employ linear program solution of \ac{mdp}s to design the optimal   strategy that maximizes the attack success probability.  Covert attacks are investigated in \cite{linSynthesisCovertActuator2020} for DES and \cite{taiSynthesisCovertSensor2021} for networked DES, where the attacker's goal is to remain hidden while compromising the system via stealthy sensor/actuator attacks. % It has been shown that for deterministic transition systems with controllable and uncontrollable events, one can employ reactive synthesis to solve  supervisory control and vice verse provided some assumption is satisfied \cite{schmuckRelationReactiveSynthesis2018}. However, how to use reactive synthesis for attack-aware control remains to be investigated--and thus the focus of this work. 

\edit{Our game model is different from both deterministic and stochastic DES models. First, deterministic DESs capture the adversarial interactions by a deterministic transition system with controllable and uncontrollable events and observable and unobservable events. In our model, the system dynamics is stochastic and the observation is partially controlled due to the presence of sensor attacks. In works on stochastic DESs \cite{Ratnesh2017,meira-goesSynthesisSensorDeception2019}, the system is modeled as a probabilistic automaton that specifies, for each state, the probability distribution over possible events. This model also differs from our game in which the defender decides an action, and the outcome of that action is determined by a probability distribution. Our model can reduce to a stochastic DES if the defender's policy is fixed. However, we aim to synthesize an attack-aware control strategy for the defender and thereby use the two-player stochastic game formulation. }
% \subsection{Approach overview and contributions}

\section{Preliminaries and Problem Formulation}
\label{sec:preliminaries_and_problem_formulation}

%\paragraph*{Notations}% Let $\Sigma$ be a finite set of symbols, also known as the \emph{alphabet}. A sequence of symbols $w=\sigma_0 \sigma_1 \ldots \sigma_n$ with $\sigma_i\in \Sigma$ for any $0 \leq i \leq n$, is called a \emph{finite word}. The set $\Sigma^\ast$ is the set of all finite words that can be generated with alphabet $\Sigma$. The length of a word is denoted by $\abs{w}$.   
% $\Sigma^+$ is the set of finite words whose lengths are greater than 1.  
%We denote the set of all $\omega$-regular words as $\Sigma^\omega$ obtained by concatenating the elements in $\Sigma$ infinitely many times.  % We define the set of nonempty finite words $\Sigma^{+} = \Sigma^{\ast} \setminus \{\varepsilon\}$.  
\edit{Given a finite set $X$, $\dist(X)$ denotes the set of all probability distributions over $X$, and for a distribution $d\in \dist (X)$, $\supp(d) = \{x\in X\mid d(x)>0\} $ denotes the support of $d$.} 
%\aknote{Do we use $\Sigma$ for words? Symbol overload! Consider shortening here. }

% \aknote{Highlight how our game formulation is different from existing POSG formulations in terms of multiple observation functions.  }

We introduce a new class of partially observable stochastic games played under asymmetric information. In this game, an autonomous agent (Player 1, P1) actively queries sensors to obtain task-relevant information. Meanwhile, an attacker (Player 2, P2) seeks to compromise P1's mission by reactively blocking the sensor information requested by P1. 
% As a result, the observation received by P1 is jointly determined by P1's choice of sensor query and P2's choice of attack.

% We consider the probabilistic planning of an autonomous agent (called Player 1, or P1, pronoun `she') in an adversarial environment, where an attacker (called \edit{Player 2}, or P2, pronoun `he') can carry out attacks on the sensors of P1 \edit{ under following assumptions:
% \begin{inparaenum}[1)]
%     \item P1 has partial and P2 has perfect observation.
%     \item P2 carries out the attack actions upon observing P1's information and actions.
%     \item P1 knows that the sensors are being attacked by P2.
% \end{inparaenum} The interaction between P1 and P2 is captured using the following model, introduced in our prior work \cite{kulkarniQualitativePlanningImperfect2021a}. The difference between this work and \cite{kulkarniQualitativePlanningImperfect2021a} is that the latter does not assume (3) and considers P1 to be attack-unaware.}

% \editcheck{We make the following assumptions for our model: 
% \begin{inparaenum}[1)]
%     \item P1 has partial and P2 has perfect observation.
%     \item P2 carries out the attack actions upon observing P1's information and actions.
%     \item P1 knows that the sensors are being attacked by P2.
% \end{inparaenum}
% The difference between this work and \cite{kulkarniQualitativePlanningImperfect2021a} is that the latter does not assume 3) and consider attack-unaware P1.}

\begin{definition}[Zero-sum Stochastic Reachability Game with Partially Controllable Observation Function%\cite{kulkarniQualitativePlanningImperfect2021a}
]
	\label{def:system}
	A two-player stochastic game with partially controllable observation function in which P1 has a reachability objective is a tuple 
	\[
	G = \langle S, \ctrlAct, P, s_0, \Gamma, \senseAct \times \attackAct, \calO, \obs, o_0, F \rangle, 
	\]
	where 
	\begin{inparaenum}[1)]
		\item $\langle S, \ctrlAct, P, s_0 \rangle$ is an \ac{mdp} where $S$ is a finite set of states; $A$ is a finite set of actions; $s_0$ is an initial state; $P: S \times A \rightarrow\dist(S)$ is a probabilistic transition  function such that
	$P(s,a,s')$ is the probability of reaching state $s'$ given action $a$ taken at state $s$.	
		% \item $\Sigma$ is a set of sensors, each of which covers a unique (and possibly overlapping) subset of states. Together, all sensors in $\Sigma$ cover $S$; 
% 		
		\item  $\Gamma= \{0,1,\ldots, N\}$ is a set of indexed sensors. %Sensor $i$ covers a subset of states.
	%		\ak{removed: $\gamma$ definition.}
% 		The function $\gamma: \Gamma \rightarrow 2^S$ maps a sensor to a set of states covered by that sensor.
%  		
		\item $\senseAct\subseteq 2^\Gamma$ is a set of sensor query actions of P1, each of which acquires sensing information from a   subset of sensors from $\Gamma$;
		\item $\attackAct \subseteq 2^\Gamma$ is a set of sensor attack actions of P2, each of which blocks sensing information of a subset of sensors from $\Gamma$, similar to jamming attacks \cite{tayebiWirelessSensorNetwork2015,groverJammingAntijammingTechniques2014}. 
		\item $\calO \subseteq 2^S$ is a   set of observations. %Each observation consists of a subset of states that are observation-equivalent to P1.
		\item $\obs: S \times \senseAct \times \attackAct \rightarrow \calO$ is a deterministic observation function \edit{of P1}, which maps a state $s \in S$, a sensor query action $\sense$, and a sensor attack action $\att$ into an observation $o = \obs(s, \sense, \att) \in \calO$. 
		\item $o_0 \in \calO$ is an initial observation and $s_0\in o_0$. %A special case is when $o_0=\{s_0\}$ and P1 knows the initial state.
		\item $F \subseteq S$ is a set of final states. \edit{P1 must enforce a visit to $F$ to satisfy such a \emph{reachability objective}}.% describing P1's \emph{reachability objective}.  P1 is to enforce a run to reach $F$. 
	\end{inparaenum}
\end{definition}
\edit{In contrast to the common POSG models \cite{raskin2007algorithms} where the observation functions are fixed, in our game, the observation function of P1 is determined dynamically by P1's active sensing and P2's reactive sensor attacks. }
    \edit{In particular, the observation generated due to P1's sensor query and P2's sensor attack is understood as follows: Each sensor $i \in \Gamma$ covers a subset $S_i$ of states $S$. Assuming $s$ to be the current state, sensor $i$ returns a Boolean value \edit{$v_i$: $v_i=\mathsf{True}$} if $s\in S_i$, otherwise $v_i= \mathsf{False}$. Given a state $s\in S$, a sensing action $\sense \in \senseAct$ and a sensor attack action $\att \in \attackAct$, the observation %\editcheck{includes a set $o$ of states such that for each $s' \in o$, the   readings for sensors in $\sense \setminus \att$ given the state to be $s^o$
    %are the same as these given the state to be $s$. Please see \cite{kulkarniQualitativePlanningImperfect2021a} for a rigorous algorithm to compute the observation function. }
     $\obs(s, \sigma, \beta)$ of state $s$ is given by }% (see \cite{kulkarniQualitativePlanningImperfect2021a})} %\sh{Are you defining the specific $\obs$ used in the paper? If so, this needs to be highlighted.} \sh{The expression below does not immediate show its dependence on $s$. It took me some time to realize that $v_i$ actually depends on $s$.}\editcheck{include observation $o$ of state $s$.} %
    % \begin{align*}
    % \obs(s, \sigma, \beta) = & S \cap \{s \in S_i \mid i \in \sigma \setminus \beta \text{ and } v_i = \mathsf{True}\} \cap \\
    % &\{s \in S \setminus S_i \mid i \in \sigma \setminus \beta \text{ and } v_i = \mathsf{False}\}
    % \end{align*}
    % 
    % 
    \edit{
    \[
    \obs(s, \sigma, \beta)  = S - \left(\bigcup_{i\in \sigma\setminus \beta, v_i = \mathsf{True}} S\setminus S_i  \cup \bigcup_{i \in \sigma\setminus \beta, v_i=\mathsf{False}} S_i\right)
    \]}
    %   
% 
% 
% 

% \jf{AK: check if the above equation is consistent with your previous algorithm. need to make the paper self-contained.}
Two states $s, s' \in S$ are said to be observation equivalent given the sensing action and sensor attack action $\sense,\att$ if $\obs(s,\sense,\att)= \obs(s',\sense,\att)$. %The observation $o$ includes the set of observation-equivalent states. %, that is, the sensor readings for P1 is the same  for any $s,s'\in o$.
% 
%\sh{Why the last statement holds is not immediate to me.}
% 

\edit{
\textbf{Information structure. }
In this game, we assume that 1) P2 has perfect observation of states and actions, \ie, P2 can directly observe the current state and P1's control and sensing actions. 2) P1 knows which sensors are attacked by P2---this assumption holds for jamming attacks.}

\textbf{Game Play.} The game play in $G$ is constructed as follows. From the initial state $s_0$, P1 obtains the initial observation $o_0$. Based on the observation, P1 selects a control action $a_0 \in \ctrlAct$ and a sensor query action $\sense_0 \in \senseAct$. The system moves to state $s_1$ with probability $P(s_0, a_0, s_1)$. At state $s_1$, P2 selects an attack action $\att_0 \in \attackAct$. The system generates a new observation $o_1 = \obs(s_1,\sense_0,\att_0)$ determined by the state, P1's sensing action and P2's sensor attack action. We denote the resulting play as $\rho = s_0 (a_0, \sense_0)  s_1  \att_0 (a_1, \sense_1)  s_2 \att_1 \ldots$. Note that P2's attack action is taken after P2 \edit{observes} the current state. 
The set of plays in $G$ is denoted by $\plays(G)$ and the set of finite prefixes of plays is denoted by $\prefplays(G)$.

\textbf{Observation Equivalent Plays to P1.} Given a play $\rho   = s_0 (a_0, \sense_0)  s_1  \att_0 (a_1, \sense_1)  s_2 \att_1 \ldots$,  P1's \emph{observation} of  $\rho$ is $ \rho^o= o_0 (a_0, \sense_0,\att_0) o_1 (a_1, \sense_1, \att_1)\ldots $ where $o_{i+1}=\obs(s_{i+1}, \sense_i, \att_i)$ for all $i\ge 0$ and $o_0$ is the initial observation. For notation convenience, we denote the observation of play $\rho$ as $\obs(\rho)$.
Two plays (or play prefixes) $\rho, \rho'$ are said to be observation-equivalent to P1, denoted by $\rho \sim \rho'$, if and only if $\obs(\rho)=\obs(\rho')$.
% \deleted{The observation of a play for P2 is the same as the play $\rho$. Thus, P2 observes the true game state along with P1's observation. Thus, P2 has perfect observation and given that P1 does not observe the actual game state, it is said to have partial observation.}
%\aknote{Inconsistent domains: In previous para, $\mathsf{Obs}(s, \sigma, \beta)$, this para $\obs(\rho)$. In previous para, do we want to mention Alg.~1? }

\textbf{P1's Reachability Objective and Strategy.} A play $\rho =s_0 (a_0 ,\sense_0) s_1 \att_0  (a_1, \sense_1)s_2 \att_1 \ldots $ is \emph{winning} for P1 if $s_k\in F$ for some $k\ge 0$. Otherwise, it is winning for P2.  During interaction, P1 must determine, simultaneously, a control action $a\in A$, and a sensor query action $\sense \in \senseAct$. We denote P1's set of actions by $\act_1  = A \times \senseAct$, and that of P2 by $\act_2= \attackAct$.
% We denote $\act_1  = A \times \senseAct$ as the actions of P1 and $\act_2= \attackAct$  as the action space of P2. 
A finite-memory, randomized (resp., deterministic) strategy for player $j \in \{1, 2\}$ is a function $\pi_j : \prefplays(G) \rightarrow \dist(\act_j)$ (resp., $\pi_j : \prefplays(G) \rightarrow \act_j$). A player $j$ is said to follow strategy $\pi_j$ if for any prefix $\rho\in \prefplays(G)$ at which $\pi_j$ is defined, player $j$ takes the action $\pi_j(\rho)$ if $\pi_j$ is deterministic, or an action $a \in \supp(\pi_j(\rho))$ with probability  $\pi_j(\rho,a)$ if $\pi_j$ is randomized.
% \deleted{The set of all strategies of player $j$ is denoted by $\Pi_j$.}  
A strategy is said to be \emph{observation-based} if $\pi_j(\rho) = \pi_j(\rho')$ whenever $\rho \sim \rho'$. 

\begin{problem}
	\label{prob:main}
% Given \edit{a} two-player stochastic game $G$ with partially controllable observation function in which P1 has partial and P2 has perfect observations, 
\edit{Given a game $G$ in Def.~\ref{def:system},}
determine if there exists an observation-based strategy 
% \deleted{$\pi_1 \in \Pi_1^{O}$} 
% \edit{$\pi_1$} 
using which   P1 can satisfy the reachability objective with probability one, for any sensor attack strategy played by P2 with perfect observations.
\end{problem}
\section{Synthesizing attack-aware strategies with active perception}
\label{sec:Synthesis}Given P2's perfect observation,  P2 can construct P1's belief given his information and higher-order information (what P2 knows P1 observes). 
  To solve Problem~\ref{prob:main}, 
%   \deleted{we introduce a reduction of} 
  \edit{we reduce} the original game in Def.~\ref{def:system} into a two-player stochastic game whose states include \emph{P2's belief of P1's belief.}
%   two-player stochastic game with perfect observations but P1's strategy space is restricted. We develop an algorithm to solve the new game and obtain the almost-sure winning strategies for the attack-aware P1 with partial observations.

% First, we construct a two-player, turn-based stochastic game from the original game $G$ in Def.~\ref{def:system}. The new game has augmented states, each of which consists of the original game state, observable to P2 and the belief of P1, given P1's partial observations.  

To ease the construction, we introduce a function \edit{$\post_G: 2^S \times A \rightarrow 2^S$ that maps a given state-action pair $(s,a)$ to the possible reachable states. We then denote $\post_G(\{s\},a)$ as $\post_G(s,a)$. We also define $\post_G(B,a) = \{s' \in S \mid \exists s \in B: P(s,a,s')>0 \}$ where $B \subseteq S$ is a subset of states.}

% We  define $\post_G(s, a)= \{s'\in S \mid P(s, a, s') > 0\} $ to be the set of possible reachable states given a state-action pair $(s,a)$. Abusing the notation, we define $\post_G(B, a) = \bigcup_{s \in B} \post_G(s, a)$ where $B\subseteq S$ is a subset of states.
%\aknote{A play should differentiate actions of P1 and P2: $\rho = s_0 (a_0, \sense_0) \att_0 s_1 (a_1, \sense_1) \att_1 \ldots$}
% \aknote{Define. $\post_G: 2^S \times A \rightarrow 2^S$, $\post_G(B, a)  = \{s' \in S \mid \exists s \in B: P(s, a, s') > 0\}$. Denote $\post_G(\{s\}, a)$ as $\post_G(s, a)$. }
% \jf{agree with the change. move the location.}
\begin{definition}
    \label{def:two_player_game}
	Given the zero-sum stochastic game with partially controllable observations $G$ (Def.~\ref{def:system}), the stochastic two-player reachability game augmented with P2's belief and P2's belief of P1's belief is a tuple 
	\[
	\mathcal{G} = \langle Q \cup \{q_F\}, \act_1\cup\act_2, \delta, q_0, q_F \rangle,
	\]
	where \begin{itemize} 

% 	\begin{itemize}
		\item $Q = Q_1\cup Q_N\cup Q_2$ is the state set \edit{consisting of P1, P2 and nature states (c.f. \cite{chatterjee2012survey})}. $Q_1= \{(s, B) \mid s\in S, B\subseteq S\}$ is the set of states where P1 selects a (control and sensing) action $(a, \sense)$. $Q_N=\{(s,B,a,\sense)\mid s\in S, B\in 2^S,  (a,\sense)\in \act_1\}$ is the set of nature's states. $Q_2= \{(s,B, \sense)\mid s \in S, B \in 2^S, \sense \in \Sigma\}$ is a set of states where P2 selects a sensor attack action.
	%	\su{$Q_N$ definition may have a correction. $s \notin B'$ should be allowed.}
% 		
		\item $q_F$ is a single final state.  It is also a sink state. 
		\item $\act_1 =A\times \Sigma$ is a set of P1's actions and $\act_2= \attackAct$ is a set of P2's actions. 
		\item $q_0 = (s_0,o_0)$ is the initial state. 
		\item \edit{$\delta: (Q_1 \times \act_1) \cup Q_N \cup (Q_2 \times \act_2) \rightarrow \dist(Q \cup \{q_F\})$ is the probabilistic transition function  defined as follows:}
% 		\begin{enumerate}
       \begin{inparaenum}[]
			\item For a P1's  state $(s, B) \in Q_1$ and action $(a, \sense) \in \act_1$,   $\delta((s,B),(a,\sense), (s,B',a,\sense)) =1$,  where $B' = \post_G(B,a)$. That is, with probability one, a nature's state $(s,B',a,\sense )$ is reached. 
			\item For a nature's state $(s,B',a,\sense) \in Q_N$, we distinguish three cases:
			\begin{inparaenum}[1)]
    			\item If $\post_G(s,a)\subseteq F$ then $\delta((s, B',a,\sense),q_F)=1$. 
    			\item If 	$\post_G(s,a)\cap F =\emptyset$, then   $\delta((s,B',a,\sense), (s',B',\sense)) =P(s,a,s')$. 
    			\item If $\post_G(s,a)\cap F\ne \emptyset$ and $\post_G(s,a) \setminus F \ne\emptyset$, then, for some $\epsilon \in (0,1)$, $\delta((s,B',a,\sense),q_F) =\epsilon$ and $\delta((s,B',a,\sense), (s',B',\sense)) =1-\epsilon$.     
    			 %and $\epsilon \in (0,1)$ is a small constant. 
    			 That is, with some positive probability $\epsilon$, the final, sink state $q_F$ is reached. 
			 \end{inparaenum}
			 %
% 			if $s'\in \post_G(s,a)$, then $\delta((s,B',a,\sense), (s',B', \sense))=P(s,a,s')$. It is noted that the nature player does not take actions but only determines a next state according to a probability distribution given the transition function in the original game $G$.
% 			$G^1$. 
			\item For a P2's state $(s',B',\sense) \in Q_2$, and an attack action $\att \in \act_2$,  $\delta((s',B',\sense),\att,(s', B'')) = 1$ where $B'' =B'\cap \obs(s', \sense,\att)$.  
% 		\end{enumerate}
        \end{inparaenum}
% \end{itemize}
% \aknote{Check transition function definition. I've made minor edits.}
\end{itemize}
\end{definition}

A sequence of  transitions $(s,B)\xrightarrow{(a,\sense)} (s, B', a,\sense) \dashrightarrow (s',B', \sense)\xrightarrow{\att} (s',B'') $ is understood as follows: At the state $(s,B)$, the true state is $s$ and P1 believes any state in $B$ is possibly the true state. P1 selects a pair of control and sensing actions $(a,\sense)$ and updates $B$ to $B'$, which includes a set of states that may be reached if action $a$ is taken at some state in $B$. Then,  the nature player makes a probabilistic transition (represented by the dash arrow) to a new state $s'$ according to the distribution $P(s,a,s')$. P2 observes the true state $s'$ and, \edit{then, chooses} a sensor attack action $\att$. With this sensor attack, P1 observes $\obs(s', \sigma, \att)$ and updates P1's belief to eliminate states that are not consistent with the observation.
 
% The game is constructed to show that P2's sensor attack actions can effectively influence the belief of the P1. 
\edit{Def.~\ref{def:two_player_game} makes it explicit that while P2 cannot directly control the true state of the game, P2 can affect the augmented state of   game $\cal G$  indirectly by influencing the belief of P1.} \edit{ 
Our belief structure is inspired by stochastic games with signals \cite{bertrand2017qualitative}, where a player constructs a belief of his own and the belief of his opponent's belief.  However, \cite{bertrand2017qualitative} models the player interactions as a stochastic game with signals. Our modeling and solutions are different from \cite{bertrand2017qualitative}. } %\edit{However, the solution in \cite{bertrand2017qualitative} considers concurrent game while our game is turn-based: P2 makes an action decision \emph{after} observing P1's action and the resulting state.} %\sh{Aren't turn-based games a special instance of concurrent/simultaneous-move games? The reduction works by introducing an additional binary state so that only one player can affect the transition at any time; the binary state controls which player can affect the transition and alternates between steps.}

Next, we describe how to use the game $\mathcal{G}$ augmented with beliefs  to solve an attack-aware strategy in the original game $G$. \edit{First, we show that when P2 is limited to blocking sensor readings, regardless of P2's attack, P1 is sure that one of the state in P1's belief is the true state.} %\sh{This gave me the feeling that Lemma~\ref{lma:containment} is related to the ``jamming'' statement, but Lemma~\ref{lma:containment} seems to be about something else.}
 %\editcheck{JF: addressed. change it to ``blocking sensor reading.'', please revise it in the rebuttal.}% First, when P2's attack actions are limited to jamming, \editcheck{unintroduced: or equivalently removal attacks,}
%  \edit{the following lemma shows the limitation on P2's capability: Regardless of P2's attack, P1 can ensure that one of the state in P1's belief is the true state.} 
% %  \ak{motivate the importance of consistency. Do you know good references here? @JF?}
%  \jf{Revised. see above.}
 
\begin{lemma}
\label{lma:containment}
 If a state $(s,B)$ is reachable from the initial state $q_0$, then $s\in B$.
	\end{lemma}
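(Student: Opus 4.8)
The plan is to prove this by induction on the length of a path from $q_0$ to the state in question, tracking the invariant ``true state belongs to belief'' across each of the four types of transitions in $\delta$.

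\medskip

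\noindent\textbf{Base case.} The initial state is $q_0 = (s_0, o_0)$, and by Def.~\ref{def:system} we have $s_0 \in o_0$. So the invariant holds at the start.

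\medskip

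\noindent\textbf{Inductive step.} Suppose $q$ is reachable and the invariant holds at $q$; I consider each outgoing transition type. First, if $q = (s,B) \in Q_1$ with $s \in B$, the only successor is the nature state $(s, B', a, \sense)$ with $B' = \post_G(B, a)$; since $s \in B$, every state in $\post_G(s,a)$ lies in $B'$, but the state component is still $s$, so I must instead observe that the relevant invariant at nature states is that the \emph{pre-transition} true state $s$ satisfies $\post_G(s,a) \subseteq B'$ — i.e., I strengthen the induction hypothesis to also cover $Q_N$ and $Q_2$ states appropriately. Concretely, the strengthened claim is: for reachable $(s,B) \in Q_1$, $s \in B$; for reachable $(s, B', a, \sense) \in Q_N$, $\post_G(s,a) \subseteq B'$; for reachable $(s', B', \sense) \in Q_2$, $s' \in B'$. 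Then: a $Q_1 \to Q_N$ transition preserves this because $s \in B$ implies $\post_G(s,a) \subseteq \post_G(B,a) = B'$. A $Q_N \to Q_2$ transition (case 2 or 3 of the nature cases) goes to $(s', B', \sense)$ with $P(s,a,s')>0$, hence $s' \in \post_G(s,a) \subseteq B'$. A $Q_2 \to Q_1$ transition goes to $(s', B'')$ with $B'' = B' \cap \obs(s', \sense, \att)$; since $s' \in B'$ and trivially $s' \in \obs(s', \sense, \att)$ (the observation of a state always contains that state), we get $s' \in B''$. The transitions into $q_F$ are vacuous for the claim. This closes the induction.

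\medskip

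\noindent I expect the main subtlety — rather than a genuine obstacle — to be recognizing that the naive statement ``$s \in B$'' is not directly inductive across nature states, because at a nature state the belief has already been advanced to $B' = \post_G(B,a)$ while the recorded true state $s$ is the pre-transition state. The fix is the standard one of strengthening the induction hypothesis to a three-part invariant keyed to the state type, after which each transition case is a one-line check. The only other point worth stating explicitly is the fact that $s' \in \obs(s', \sense, \att)$ for every $s', \sense, \att$, which is immediate from the definition of $\obs$ as returning the observation-equivalence class of the current state (so it contains the current state itself).
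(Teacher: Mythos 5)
Your proof is correct and follows essentially the same route as the paper's: induction along the play, showing the invariant ``true state lies in the belief'' is preserved through the P1, nature, and P2 transitions, with the key facts being $s_{k+1}\in\post_G(B_k,a)$ and $s_{k+1}\in\obs(s_{k+1},\sense,\att)$. The only cosmetic differences are that you phrase the invariant as a three-part statement keyed to the state type rather than arguing one full round at a time, and you justify $s'\in\obs(s',\sense,\att)$ directly from reflexivity of observation equivalence, whereas the paper derives it from the fact that jamming attacks can only hide sensor readings, i.e., $\obs(s',\sense,\lambda)\subseteq\obs(s',\sense,\att)$.
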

\begin{proof}
	By induction. The initial state $q_0 =(s_0,o_0)$ satisfies the condition (See Def.~\ref{def:system}). Consider a play in the game $\mathcal{G}$ such that $q^{1}_k$ is the $k$-th state reached by a sequence of players' actions (P1, P2's actions and the nature's stochastic choices). Suppose $q^1_k= (s_k,B_k)$ that satisfies $s_k \in B_k$. For any action  $(a,\sense)\in \act_1$ of P1,  the next state reached is $(s_k, \post_G(B_k,a), a, \sense)$. 
 	From that state, the nature's probabilistic action will determine the next state $(s_{k+1},\post_G(B_k,a), \sense)$. Note that because $s_k\in B_k$, then $s_{k+1}\in \post_G(B_k,a)$ by construction.
	
	Then, the attacker P2 takes an action $\att$ to generate an observation for P1,  $o=\obs(s_{k+1}, \sense, \att)$. As the attacker can only hide sensor readings, it holds that $ \obs(s_{k+1}, \sense, \lambda) \subseteq \obs(s_{k+1}, \sense, \att) $
	where $\lambda$ means no attack.
	And $s_{k+1}\in  \obs(s_{k+1}, \sense, \lambda)$ implies $s_{k+1}\in  \obs(s_{k+1}, \sense, \att)$. The new belief for P1 is $B_{k+1}= o \cap \post_G(B_k,a)$ and since $s_{k+1}\in o$ and $s_{k+1}\in \post_G(B_k,a)$,   it holds that $s_{k+1}\in B_{k+1}$.
	\end{proof}

%\ak{Rmk. This is true only when attacker can only hide the sensor readings, but not flip them.}
%\jie{yes. I think flipping sensors will be very difficult to deal with.}
%\su{Rmk. is this not true by definition. May have to revise the way in which we write.}
%In other words, no matter which attack action is taken, P1 can always be assured that in its belief, one of the states is \emph{the} actual game state.  Hence, 
% \ak{Lma.~\ref{lemma:containment} is critical for P1 to know whether a belief-based strategy is almost-surely winning, especially when a visit to $F$ cannot be observed directly. For example, Consider a transition $(s,B', a,\sense)\dashrightarrow q_F $ where $\post_G(s,a)\cap F \ne \emptyset$. After this transition, P1 can be sure that $F$ is visited with probability greater than zero because of Lma.~\ref{lma:containment} and the fact that $\post_G(s,a) \subseteq B'$ implies $B'\cap F\ne \emptyset$.}

 %\sh{The transition to ``belief-based strategies'' from ``observation-based strategies'' seems a bit too abrupt to me. Revise it at your own discretion.}
% \editcheck{addressed: change the belief-based to observation-based} 
 \edit{This property is critical to construct P1's observation-based strategy to reach $F$, even if P1 may not know when $F$ is reached. Consider a transition $(s,B', a,\sense)\dashrightarrow q_F $ where $\post_G(s,a)\cap F \ne \emptyset$. Since $\post_G(s,a) \subseteq B'$ implies $B'\cap F\ne \emptyset$,  P1 \emph{knows}, without observing, the probability that  $F$ is reached is  greater than $0$.} 
%  \jf{the above is to precisely address the reviewer's comment why this property is important.}

\begin{definition}[Belief-based Almost-Sure Winning Strategy/Region]
Given the two-player game $\mathcal{G}$, 
a strategy $\pi_1$ is \emph{almost-sure winning} for P1 if by following $\pi_1$, regardless of P2's strategy, P1   ensures to reach $q_F$ with probability one.
A strategy $\pi_1$ is \emph{belief-based} provided that for two states $(s,B), (s',B') \in Q_1$,  if $B=B'$ then $\pi_1((s,B)) = \pi_1((s',B'))$. %\subseteq \act_1$. 
A set of states from which P1 has a \emph{belief-based, almost-sure winning strategy} is called P1's almost-sure winning region with partial observation, denoted $\asw_1$.
\end{definition}
% Next, we describe our algorithm to solve P1's observation-based almost-sure winning strategy with active perception. We say a strategy $\pi_1:   Q_1\rightarrow 2^{\act_1}$ for P1 is \emph{memoryless,  belief-based} if  it satisfies the condition that for two states $(s,B), (s',B') \in Q_1$,  if $B=B'$ then $\pi_1((s,B)) = \pi_1((s',B'))\subseteq \act_1$. 
% In addition, for any action $a\in \pi_1((s,B))$, P1 must take that action with a non-zero probability. Note the   strategy $\pi_1$ defines a set of randomized strategies  which share the same support for any P1's state where the strategy is defined.
Note that any belief-based strategy is  observation-based  because the belief is constructed from   P1's observations. \edit{Next, we prove that by solving the game $\mathcal{G}$ in Def. 2, we can obtain a joint control and active sensing strategy to satisfy the objective against sensor attacks in the game $G$.}
%A play $\rho=q^1_0 (a_0,\sense_0)q^1_0 q^2_0 \att_0 q^1_1 \ldots$ is a sequence of alternating states and actions. The observation of the play for P1 is 
%$\proj_B (\rho) = B_0 (a_0,\sense_0, \att_0) B_1 \ldots$ where $B_k= \proj_B(q_k)$ is the projection of P1's states $q_k^1=(s_k,B_k)$ into the belief state $B_k$.  

%A finite-memory, observation-based strategy for P1 is $\pi_1^o: \rho \mapsto \dist(\act_1)$ such that if $\proj_B (\rho) = \proj_B (\rho')$ then $\pi_1^o (\rho)=\pi_1^o(\rho')$. Clearly, any belief-based, memoryless strategy is an observation-based strategy.   
\begin{theorem}	\label{thm:belief-based-asw}
% 	If P1 has an almost-sure winning, memoryless, belief-based strategy $\pi_1$ in $\mathcal{G}$ for the reachability objective $\{q_F\}$, then P1 has an almost-sure winning strategy against any  sensor attack strategy carried out by P2 with perfect observations in the game $G$.
	\edit{A belief-based almost-sure winning strategy to reach $\{q_F\}$ in P1's belief-based game $\calG$ is also almost-surely winning for P1 to visit $F$ in the game with partially controllable observation function game, $G$, regardless of the sensor attack strategy carried out by P2. }
\end{theorem}
% \jf{change Every to `a'.}
 \begin{proof}
 By the construction of the game $\mathcal{G}$, the event of reaching $q_F$ is conditioned on the event that  a nature state $(s,B,a,\sense) \in Q_N$ where $\post_G(s,a)\cap F \ne \emptyset$ or $\post_G(s,a)\subseteq F$ is visited.  
 Let $Y \subseteq Q_N$ be all nature states that can be reached prior to visiting $q_F$ given the almost-sure winning strategy \edit{$\pi$}. 
 If $q_F$ is visited with probability one from any state in the almost-sure winning region, then the set $Y$ must be visited with probability one from any state in $\asw_1$. Let $p = \min_{(s,B, a,\sense)\in Y}\Pr(F\mid s,a)$ be the minimal probability of reaching $F$ from a state in $Y$. The probability of not reaching $F$ in $k$  visits to $Y$ is smaller than $(1-p)^k$. In addition, if $F$ is not reached, the almost-sure winning strategy will reach some state $q' \in \asw_1$ from which $Y$ is visited again with probability one.
 Hence, the probability of eventually reaching $F$ is $\lim_{k\rightarrow \infty}1- (1-p)^k=1$. That is, $\pi$ is also almost-surely winning to visit $F$ in game $G$.
% The belief-based strategy is constructed with P1's partial observation. \edit{Thus,  if P1 has an almost-sure winning, belief-based strategy in $\mathcal{G}$, then P1 has an almost-sure winning strategy, with partial observations, against any sensor attack strategy carried out by P2 with perfect observations.} 
% 
% \jie{AK, Can you check this new proof and see if there is still question? if not, please remove the comment. If yes, we should discuss.}
% \ak{The proof has problem. In Def. 2, the transition $(s, B) \rightarrow q_F$ occurs under two conditions. First, when $\post_G(s, a) \subseteq F$. In this case, it is clear that a final state is visited in next step. Second, when $\post_G(s, a) \cap F \neq \emptyset$ and $\post_G(s, a) \setminus F \neq \emptyset$ hold. In this case, we CANNOT say that a run $\rho_\calG = \ldots (s, B) a q_F$ necessarily corresponds to a run $\rho_G = \ldots s a s'$ with $s' \in F$. This is because Def. 2 does not specify ``when'' a transition to $q_F$ happens, it only says that it occurs with positive probability when successors of $s$ contain a final state. This leaves the possibility that $s' \notin F$ but still $\calG$ reached $q_F$. I suggest we revise the Def. 2 to clarify or revisit Chatterjee's paper to see what they do?}
% \jie{Chatterjee's paper does not use this technique. The proof is inspired by \cite{baierDecisionProblemsProbabilistic2008}, which does not provide a proof either. This should have been an exercise for you and Sumukha...}
% 
\end{proof}

Next, we introduce Alg.~\ref{alg:posg-reachability} to compute a belief-based, \ac{asw} \emph{randomized} strategy for P1. The algorithm includes the following steps: 
 In the first step,  we use the solution of two-player stochastic games with two-sided perfect observations \cite{bloemGraphGamesReactive2018},   to solve the positive winning region for P2, denoted $\poswin_2 \subsetneq Q$,  which includes a set of states from which P2 can ensure a winning play with a positive probability, when both players have perfect observations. Starting from any state $q\in \poswin_2$, if P1 cannot   reach $q_F$ with probability one even if P1 has perfect observation, then P1 cannot   reach $q_F$ with probability one given partial observations.
 
 In the second step, we   initialize a set $Y_0 = Q\setminus \poswin_2$ and iteratively refine the set $Y_i$ to obtain $Y_{i+1}$, for $i\ge 0$. At iteration $i$,  Alg.~\ref{alg:posg-reachability} computes a set of states, from which P1 can ensure to stay within $Y_i$ with probability one, and with a positive probability, to reach $q_F$ in finite steps.
 The following functions are defined: 
 For each P1's state $q\in Q_1$, $Y \subseteq Q$, let 
\[
\allow(q, Y)=\{(a,\sense) \in \act_1 \mid \post_{\cal G}(q,(a,\sense)) \subseteq Y\},
\]
where $\post_{\cal G} (q, (a,\sense)) =\{q' \mid \delta (q, (a,\sense),q') > 0 \}$ is the set of states that can be reached given P1 applies $(a,\sense)$ at state $q$. By definition, P1 ensures that the next state stays within $Y$ by  taking an allowed action in $\allow(q, Y)$.

Given $q=(s,B)\in Q_1$, let $[q]_\sim = [(s,B)]_\sim = \{(s', B)\} \mid B'=B\}$ be the set of states in which P1 has the same belief as $q$. We define 
\[
\allow([q]_\sim, Y)=\bigcap_{q' \in [q]_\sim} \allow(q',Y).
\]
That is, an action is allowed at $q$ if and only if it is allowed 
at any other state $q'$ that shares the same belief as $q$.

 Given a set $Y$ and a set $R\subseteq Y$, 
we   define three   functions:
\begin{multline*}
    \prog_1(R,Y) = \{q \mid \exists (a,\sense) \in \allow([q]_\sim,Y), \\ \mathsf{Post}_{\cal G}(q,(a,\sense))\in R\}.
\end{multline*}
which \edit{outputs} a set of states from which P1 has an allowed action to reach $R$ in one step. 
\[
\prog_2(R,Y) = \{q \mid \forall \att \in \attackAct, \mathsf{Post}_{\cal G}(q,\beta) \subseteq R\}.
\]
which \edit{outputs} a set of states from which P2 cannot prevent  reaching $R$ in the next step. 
\[
\prog_N(R,Y) = \{q \mid \supp(\delta(q))\cap R \ne \emptyset \land \supp(\delta(q))\subseteq Y\}.
\]
which \edit{outputs} a set of states from which the nature player can ensure to reach $R$ with a positive probability, while staying within $Y$ with probability one.
% \todo[inline]{the above rewrite some definitions. the following writing has problems to be discussed. }
% \jie{It is not ok to include math equations/symbols into mathsf. For example, $\mathsf{Prog_N(q,R)}$ should be $\mathsf{Prog}_N(q,R)$}
%  For nature's state, a set $\mathsf{Prog_N(q,R)}$ represents the set of progressive transitions from the nature's state $q$ to states in set $R$ and is given as
% \[
% \mathsf{Prog}_N(q,R) = \{q' \mid \forall q' \in Post_{\mathcal{G}}(q), q' \in R\}.
% \]
% And for nature's state, $\mathsf{Safe}(q,R)$ is \mathsf{TRUE} if, for a set $Y$, $\mathsf{Prog}_N(q,Y) = Post_{\mathcal{G}}(q)$ \ie, all the transitions from the nature's state progress into the safe set $Y$.

In the inner loop of  Alg.\ref{alg:posg-reachability}, given a set $Y$, after initializing  $R_0 = \{q_F\}$, Alg.~\ref{alg:posg-reachability} iteratively computes $R_{k+1}$ given $R_k$ for all $k> 0$ until a fixed point is reached. At iteration $k+1$, $R_{k+1}$ is obtained as the union of $R_k$, $\mathsf{Prog_1}(R_k,Y)$, $\mathsf{Prog_2}(R_k,Y)$ and $\mathsf{Prog_N}(R_k,Y)$. 
By definition, from any state in $R_{k+1}\setminus R_k$, P1 can ensure to reach $R_k$ with a positive probability. 
%That is, if the state $q$ is a P1 state, then it is required for the state to have an action $(a,\sigma)$ that has a transition to the lower level $R_{k}$. At the same time, as P1 cannot distinguish between equivalents states, it is also required that the action $(a,\sigma)$ is allowed for any state $q'$ that is observation-equivalent to $q$, \ie,  sharing the same belief.  For a nature's state it is required that all transitions to be in $Y$ and have at least one transition to  $R_{k}$. And if it is a P2 state, it is required that for any attack action taken by P2, the transition is to  $R_k$.
The iteration terminates when $R_{k+1}=R_k$. Then, this new fixed point is the new set $Y$ as the outer fixed point computation. Alg.~\ref{alg:posg-reachability} terminates when $Y_{j+1}= Y_j$ and the fixed point is P1's \ac{asw} region $\asw_1$.

\begin{algorithm}[h!]
{ 
\caption{\edit{Belief-based Almost-Sure Winning Region}}
\label{alg:posg-reachability}

\begin{algorithmic}[1]
	\item[\textbf{Inputs:}] Two-player reachability game with augmented states $\mathcal{G}$ and  P2's positive winning region $\win_2^{>0}$ in $\mathcal{G}$.
	\item[\textbf{Outputs:}] P1's \ac{asw} region $\asw_1$. 
    \edit{
    \State $j \gets 0$;~$Y_j \gets Q\setminus \poswin_2$
    \Repeat
        \State $k \gets 0$;~$R_k \gets \{q_F\}$
        \Repeat 
            % \State $W_1 \gets \prog_1(R_k, Y_j)$
            % \State $W_2 \gets \prog_2(R_k, Y_j)$
            % \State $W_N \gets \prog_N(R_k, Y_j)$
            \State $R_{k+1} \gets R_k \cup \prog_1(R_k, Y_j) \cup \prog_2(R_k, Y_j) \cup \prog_N(R_k, Y_j)$
            \State $k \gets k+1$
        \Until{$R_{k+1} = R_k$}
        \State $Y_{j+1} \gets R_k$;~$j \gets j+1$
    \Until{$Y_{j+1} = Y_{j}$}
    }
    \State \Return $\asw_1\gets Y_j$
    % \aknote{Comments: 
    
    % (a) Thumb rule: Each line of pseudocode should express single computational construct. Hence, I introduce $W_1, W_2, W_N$ (you may rename variables). 
    
    % (b) Although a separate Alg. 2 might conform to some directives of Object Oriented Programming, it need not have a separate `algorithm' float in paper.
    
    % (c) When you write a code-like statements in pseudo-code, the sentence MUST NOT have a full-stop. When you write an English statement, you MUST end it with a full-stop. See Zobel's book on ``Writing for Computer Science''. Unfortunately, the verbose way to write an algorithm that Zobel preaches is not conventional in any IEEE conference/journal. So, let us adopt some ideas from him.
    % }
 \end{algorithmic}
}      
\end{algorithm}
% We construct the strategy for P1 ($\pi_1$) using the Alg. \ref{alg:posg-safeposreach}  $q\in Q_1, \pi_1(q)= \{(a,\sense)\mid \delta(q,(a,\sense)) \text{ is defined}\}$. Now, as we obtain $Y_j$ from Alg.\ref{alg:posg-reachability}, we construct the strategy as follows.

% For any state $q \in Q_1$, we have,
% \[
%   \pi_1(q) = \pi_1(q) \cap   \mathsf{Allow}([q]_\sim, Y_j).
% \]

% As $\mathsf{Allow}([q]_\sim, Y_j)$ gives the set of actions that are allowed for the state $q$ and its observation states to have their transitions within $Y_j$, we remove the transitions that  are  not going into $Y_j$ from the strategy of P1. 
\edit{We establish the correctness and completeness of Alg.\ref{alg:posg-reachability} by showing that $\asw_1$ is indeed the \ac{asw} region of P1. And, at any state in $\asw_1$, P1 has an \ac{asw} strategy to visit  $F$.
% by following the strategy obtained, the game eventually reaches the final state.
}

 \begin{lemma}
The set $\asw_1$ obtained from Alg.\ref{alg:posg-reachability} is the almost-sure winning \edit{region} for P1.
\label{lemma:YisASW}
\end{lemma}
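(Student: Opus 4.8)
The plan is to prove the two inclusions $\asw_1 \subseteq \win_1^{=1}$ (soundness) and $\win_1^{=1} \subseteq \asw_1$ (completeness), where $\win_1^{=1}$ denotes the true set of states from which P1 has a belief-based almost-sure winning strategy in $\mathcal{G}$.

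\textbf{Soundness ($\asw_1 \subseteq \win_1^{=1}$).} First I would extract an explicit strategy from the fixed-point computation. For each state $q \in Y_j = \asw_1$, let $k(q)$ be the least index with $q \in R_{k(q)}$; this is a finite "rank" because the inner loop terminates. At a P1-state $q = (s,B) \in Q_1 \cap \asw_1$ with $k(q) > 0$, since $q \in W_1 = \prog_1(R_{k(q)-1}, Y_j)$ (this is the only $\prog$ set that can put a P1-state into $R_{k(q)}$), there is an action $(a,\sigma) \in \allow([q]_\sim, Y_j)$ with $\post_{\mathcal G}(q,(a,\sigma)) \subseteq R_{k(q)-1}$; define $\pi_1(q)$ to be the uniform distribution over all such actions. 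Because we used $\allow([q]_\sim, \cdot)$ rather than $\allow(q, \cdot)$, and because the set of such witnessing actions depends only on $B$ (all states in $[q]_\sim$ behave identically under $\allow$, and strict rank decrease is a property of $B'=\post_G(B,a)$ and the belief update, not of $s$), this $\pi_1$ is belief-based. Next I would argue the rank strictly decreases along every play consistent with $\pi_1$ until $q_F$ is reached: from a P1-state the successor nature-state has strictly smaller rank by the $\prog_1$ witness; from a nature-state $q \in R_k \setminus R_{k-1}$ we have $q \in W_N = \prog_N(R_{k-1},Y_j)$, so $\supp(\delta(q)) \subseteq Y_j$ and $\supp(\delta(q)) \cap R_{k-1} \neq \emptyset$, meaning with positive probability the next (P2-)state has rank $< k$ while all successors stay in $Y_j$; from a P2-state $q \in W_2 = \prog_2(R_{k-1},Y_j)$, every attack $\beta$ leads into $R_{k-1}$, so the rank drops regardless of P2. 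Hence from any $q \in \asw_1$, within a bounded number of steps (at most $\max_q k(q)$), with probability bounded below by some $p > 0$, the play either reaches $q_F$ or reaches a state in $R_0 = \{q_F\}$ — more precisely, each "round" of at most $\max_q k(q)$ steps reaches $q_F$ with probability $\geq p$ and otherwise stays in $\asw_1$ (because $\prog_N$ forces successors into $Y_j$ and $\prog_1, \prog_2$ only add states whose relevant successors are in $R_{k-1} \subseteq Y_j$). A Borel–Cantelli / geometric-tail argument identical to the one in the proof of the Theorem then gives $\Pr(\text{reach } q_F) = 1$. Finally, invoking the Theorem, this belief-based almost-sure winning strategy in $\mathcal{G}$ yields an almost-sure winning strategy for P1 in $G$.

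\textbf{Completeness ($\win_1^{=1} \subseteq \asw_1$).} I would show the complement is contained in P2's winning region for the complementary (safety / avoid-$q_F$) objective. First, any $q \in \poswin_2$ is removed at initialization; by the first-step justification in the text (solving the perfect-information game), P1 cannot almost-surely win from $\poswin_2$ even with perfect observations, hence a fortiori not with partial observations, so $\poswin_2 \cap \win_1^{=1} = \emptyset$. Then I would argue that when the outer loop terminates with $Y_{j+1} = Y_j = \asw_1$, every state $q \in Q \setminus \asw_1$ admits a P2 strategy (using P2's perfect observation of $s$, and resolving nature adversarially in the sense of avoiding the positive-probability branches) that keeps the play out of $q_F$ forever. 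The key structural fact: since $\asw_1$ is a fixed point of the inner operator, for every P1-state $q \notin \asw_1$ and every action $(a,\sigma) \in \allow([q]_\sim, \asw_1)$ we have $\post_{\mathcal G}(q,(a,\sigma)) \not\subseteq \asw_1$ — wait, more carefully: $q \notin \prog_1(\asw_1, Y_j)$ would need $R_k = \asw_1$, so for every allowed-for-the-belief action the successor escapes $\asw_1$ or there is no allowed action, i.e. some equivalent state's successor leaves $\asw_1$; in either case P1 cannot both stay in $\asw_1$ and make progress. Dually P2 can, from outside $\asw_1$, always respond so as to stay outside $\asw_1$ (using $\prog_2$ failing) and nature can avoid the progress branch (using $\prog_N$ failing). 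Assembling these local moves gives a P2 strategy confining the play to $Q \setminus \asw_1$, which is disjoint from $\{q_F\}$, so P1 does not win almost surely from there. I would need to be slightly careful that this P2 strategy may depend on the full state (legitimate, since P2 has perfect observation) and that "nature avoiding a branch" is acceptable because a single bad branch with positive probability, repeated, already defeats almost-sure reachability.

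\textbf{Main obstacle.} The delicate point is the completeness direction, specifically handling the $\allow([q]_\sim, \cdot)$ quantification: an action must be allowed at \emph{all} belief-equivalent states, so "P1 has no winning allowed action at $q$" must be phrased as "for the belief $B$ there is no action that is simultaneously safe-and-progressing across the whole equivalence class," and I must show this genuinely blocks \emph{belief-based} strategies (which is exactly the class we restrict to) rather than merely observation-based ones — and reconcile this with the remark in the text that belief-based deterministic strategies are insufficient in general, by emphasizing that randomization over the allowed actions is what recovers almost-sure winning here. I would also need to confirm termination of both loops (monotone decreasing $Y_j$, monotone increasing $R_k$, both over the finite lattice $2^Q$) to make "rank" well-defined; this is routine but must be stated.
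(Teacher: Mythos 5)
Your overall decomposition is the same as the paper's: one inclusion by extracting positive-probability progress through the inner level sets $R_0\subseteq R_1\subseteq\cdots$ and closing with a geometric-tail argument, the other by arguing that every state the algorithm discards (at initialization via $\poswin_2$, or at an outer iteration) admits no belief-based almost-sure winning strategy. The paper's own proof is exactly this, with the strategy-extraction details deferred to the final theorem.

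There is, however, one load-bearing claim in your soundness direction that is not justified and is in general false: that the set of witnessing actions (those $(a,\sigma)\in\allow([q]_\sim,Y_j)$ whose successor lies in $R_{k(q)-1}$) ``depends only on $B$.'' The rank of the successor nature state $(s,B',a,\sigma)$ is \emph{not} a function of $B'$ alone: nature states enter the level sets through $\prog_N$, whose condition is stated on $\supp(\delta(\cdot))$, i.e.\ on $\post_G(s,a)$ and its relation to $F$, which varies with $s$ across the class $[q]_\sim$. So your strategy ``uniform over the witnesses at $q$'' may assign different supports to belief-equivalent states and thus fail to be belief-based, which is precisely the class of strategies the lemma's region is defined over. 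The repair is what the paper actually does: let $\pi_1^\ast(q)$ randomize over \emph{all} of $\allow([q]_\sim,\asw_1)$ (belief-based by construction), and observe that at the true state the witness action receives positive probability, which is all the geometric-tail argument needs. Relatedly, in your completeness direction the claim that P2 ``keeps the play out of $q_F$ forever'' overstates what is available, since nature is probabilistic and cannot be resolved adversarially; the correct (and sufficient) statement, which you half-acknowledge at the end, is that from a discarded state every belief-based choice either forces, with positive probability, an exit into states already known to be non-winning (e.g.\ $\poswin_2$) or makes no progress toward $q_F$, so the reachability probability is bounded away from one. With those two points fixed, your argument coincides with the paper's.
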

 \begin{proof}
 Let $N$ be the index where $Y_N=Y_{N+1} $. To prove that $Y_N= \asw_1$, we prove the following:
 1) $\asw_1 \subseteq Y_j$, for all $0\le j\le N $. By induction, first, given that  $Y_0 = Q \setminus \poswin_2$, it holds that $\asw_1 \subseteq Y_0$. 
    Assume that $\asw_1\subseteq Y_i$ for some $i> 0$, we show that $\asw_1\subseteq Y_{i+1}$ as follows: \edit{Note that $Y_{i+1} = R_{k} \cup \prog_1(R_k, Y_j) \cup \prog_2(R_k, Y_j) \cup \prog_N(R_k, Y_j)$.} By construction, $Y_{i+1}$ includes  any   state from which P1 has a strategy to reach $q_F$ with a positive probability, while staying in $Y_i$. Thus, for any state $q \in  Y_{i} \setminus Y_{i+1}$, either 1) P1 cannot ensure to stay within $Y_i$ with probability one, or 2) P2 has a strategy to ensure $q_F$ is not reached with  probability 1. For case 1),  if from a state, P1 cannot ensure to stay in $Y_i$, then that state is not in $\asw_1$. This is because for any state in $\asw_1$, P1 has a strategy to ensure staying within $\asw_1$ and thereby $Y_i$ given $\asw_1\subseteq Y_i$. A state satisfying the condition in case 2)  is P2's \ac{asw} region and thus not in $\asw_1$. Therefore, $Y_{i+1}$ only removes states that are not in $\asw_1$ from $Y_i$ and thus $\asw_1 \subseteq Y_{i+1}$.
    
   2) $Y_N \setminus \asw_1 = \emptyset$, %\ie, $Y_N$ has no state that is not in $\asw_1$. 
   By contradiction, assume there exists a state $q \in Y_N\setminus \asw_1$. 
    \edit{By construction,  for any $q \in R_{k}\cup \prog_1(R_k, Y_j) \cup \prog_2(R_k, Y_j) \cup \prog_N(R_k, Y_j)$}, P1 has a strategy to reach $q_F$ with a positive probability in finitely many steps, regardless of the strategy of P2. Let $E_{T}$ be the event that ``starting from a state  in $Y_N$, a run reaches the final state $q_F$ in $T$ steps. '' and let $\gamma >0$ be the minimal probability for the event $E_T$ to occur for any state $q\in Y_N$.  Then, the probability of not reaching $q_F$ in infinitely many steps can be upper bounded by $\lim\limits_{k \to \infty}(1-\gamma)^k=0$. 
    Therefore, for any $q\in Y_N$,  P1 has a strategy to ensure $q_F$ is reached with probability one  \edit{and  thus ensures $F$ is reached with probability one (Theorem~\ref{thm:belief-based-asw}).  This contradicts with the assumption that $q \notin \asw_1$. }
    % \deleted{as it has a strategy to ensure $q_F$ is reached with probability one and thus $q\in \asw_1$.}} %Combining the 
    %Thus, it can be seen that $\Pr(E_n \hspace{1mm} in \hspace{1mm} infinite \hspace{1mm} steps)=0$, \ie, the game state does not reach the winning state $q_F$ even in infinite number of steps. But given that $q \in Y_j$, this is a contradiction. Thus, $q \notin Y_j$ and $Y_j$ consists of only the $\asw_1$ states \ie., $Y_j = \asw_1$, the set $Y_j$ is the set of almost-sure winning states for P1.
    % 
    Combining 1) and 2),  we show that $\asw_1 = Y_N$.
 \end{proof}
% \jie{proof is rewritten: the first part does not make sense. for the second part, there are problems with writing.}
% \jie{writing latex using ``TEXT'' instead of "TEXT"}
% \jie{Not equal, but upper bounded by, due to the use of minimal probability for the event $E_T$}
% \jie{distinguish almost-sure winning  $\asw$ and sure winning $\win$.} 

% \jie{THE ALGORITHMS 2 and 3 are not readable and need major revision. See comments below}

% \deleted{Finally, given the solution of P1's almost-sure winning region} 
\edit{Given} $\asw_1$, P1's belief-based, \ac{asw} strategy is defined by a set-based function $\pi_1^\ast: \asw_1\rightarrow 2^{\act_1}$ such that
\begin{equation}
\label{eq:belief-based-strategy}
\pi_1^\ast(q) = \{(a,\sense)\mid (a,\sense)\in \allow([q]_\sim, \asw_1)\}.
\end{equation}
At each state $q\in \asw_1$, P1 must take any action in $\pi_1^\ast(q)$ with a nonzero probability. 
By definition, $\pi_1(q) = \pi_1(q')$ if $q, q'$ share the same belief.

 \begin{theorem}
    By following  $\pi_1^\ast$ defined in Eq.~\ref{eq:belief-based-strategy}, P1 ensures that the game eventually reaches the   state $q_F$. %\sh{I prefer to include the definition of $\pi_1^\ast$ as part of the theorem to make the theorem self-contained. Also, shouldn't there be a requirement on $q_F$ such as $q_F \in \asw_1$?}
\end{theorem}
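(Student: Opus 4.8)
The plan is to verify three properties of the constructed strategy $\pi_1^\ast$ and then close with a Borel--Cantelli type estimate: (a) $\pi_1^\ast(q)\neq\emptyset$ at every state of $\asw_1$ where P1 moves, so $\pi_1^\ast$ is genuinely a strategy; (b) every play consistent with $\pi_1^\ast$ stays inside $\asw_1$ until it is absorbed in $q_F$; and (c) from every state of $\asw_1$ the play reaches $q_F$ within a bounded number of steps with a uniform positive probability, no matter what the perfectly informed P2 does. Together these force almost-sure reachability of $q_F$.

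First I would introduce a rank function on $\asw_1$. When Alg.~\ref{alg:posg-reachability} terminates we have $\asw_1 = Y_N = R_M$, where $R_0 = \{q_F\}\subseteq R_1\subseteq\cdots\subseteq R_M = \asw_1$ are the level sets produced by the inner loop run with $Y=\asw_1$; set $\mathrm{rank}(q) = \min\{k : q\in R_k\}$. For $q$ with $\mathrm{rank}(q) = k+1$ we have $q\in\prog_1(R_k,\asw_1)\cup\prog_2(R_k,\asw_1)\cup\prog_N(R_k,\asw_1)$, and since $\prog_1$ is meaningful only at $Q_1$-states, $\prog_2$ only at $Q_2$-states, and $\prog_N$ only at $Q_N$-states, exactly one case applies according to the type of $q$: (i) $q\in Q_1$ and some $(a,\sense)\in\allow([q]_\sim,\asw_1)$ has its unique $\delta$-successor in $R_k$; (ii) $q\in Q_2$ and $\post_{\cal G}(q,\att)\subseteq R_k$ for every $\att\in\attackAct$; (iii) $q\in Q_N$ and $\supp(\delta(q))\cap R_k\neq\emptyset$ with $\supp(\delta(q))\subseteq\asw_1$. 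Case (i) in particular yields $\allow([q]_\sim,\asw_1)\neq\emptyset$ for every $Q_1$-state of $\asw_1$ (the only rank-$0$ state, $q_F$, is a sink), which is (a). Property (b) then follows: at a $Q_1$-state $\pi_1^\ast$ plays only actions of $\allow([q]_\sim,\asw_1)$, whose successors lie in $\asw_1$ by definition of $\allow$; at a $Q_2$-state all successors lie in $R_k\subseteq\asw_1$ by (ii); at a $Q_N$-state $\supp(\delta(q))\subseteq\asw_1$ by (iii); and $q_F$ is absorbing.

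For (c), let $p_{\min}>0$ be the smallest positive value taken by $\delta$ and let $\eta>0$ be a uniform lower bound on the positive weights assigned by $\pi_1^\ast$ (it exists since $Q$ and $\act_1$ are finite); put $\gamma_0 = \min\{p_{\min},\eta\}\in(0,1]$. From any $q\in\asw_1$ with $\mathrm{rank}(q)=k+1$, a single transition reaches a state of rank $\le k$ with probability at least $\gamma_0$: in case (i) P1 plays the rank-decreasing action with probability at least $\eta$ and that successor is deterministic and in $R_k$; in case (ii) every successor is already in $R_k$; in case (iii) nature lands in $R_k$ with probability at least $p_{\min}$. Iterating over at most $M$ such steps shows that from every state of $\asw_1$ the run reaches $q_F$ within $M$ steps with probability at least $\gamma := \gamma_0^{M}>0$, uniformly and independently of P2's strategy (this reuses precisely the reasoning in the proof of Lemma~\ref{lemma:YisASW}). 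Splitting the time axis into consecutive blocks of length $M$ and using the invariance (b) to restart the argument at the start of each block, the probability of never reaching $q_F$ is at most $\lim_{m\to\infty}(1-\gamma)^m = 0$. Hence, starting from any state of $\asw_1$ --- in particular from $q_0$ whenever $q_0\in\asw_1$, which by Problem~\ref{prob:main} is the relevant case --- following $\pi_1^\ast$ drives the play to $q_F$ with probability one.

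The step I expect to be the main obstacle is making the one-step progress bound simultaneously rigorous across the three vertex types while accounting for the fact that $\pi_1^\ast$ randomizes over \emph{all} allowed actions, so on some branches the rank need not decrease. The resolution is that only positive probability of progress is required, not a monotone rank decrease, and that the intersection over $[q]_\sim$ in the definition of $\allow$ is exactly what guarantees the chosen action is safe at every observationally indistinguishable state --- this is what upgrades the construction from a perfect-information argument to a valid partial-observation, belief-based strategy.
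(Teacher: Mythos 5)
Your proposal is correct and follows essentially the same route as the paper: the level sets $R_0\subseteq\cdots\subseteq R_M=\asw_1$ from the last inner-loop run serve as a rank, actions in $\allow([q]_\sim,\asw_1)$ keep the play inside $\asw_1$, a rank-decreasing transition occurs with positive probability at every vertex type, and the probability of never reaching $q_F$ is killed by a geometric limit $(1-\gamma)^m\to 0$, exactly as in the paper's argument (and its Lemma~\ref{lemma:YisASW}). Your finish is in fact slightly tighter than the paper's: you make explicit the non-emptiness of $\pi_1^\ast$ at P1 states and replace the paper's per-level random-walk footnote with a uniform bound $\gamma_0^M$ over blocks of length $M$, which is a cleaner way of closing the same argument.
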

\begin{proof}
Let $R_0, R_1,\ldots, R_K$ be the set of level sets constructed using Alg.~\ref{alg:posg-reachability} given input $\asw_1$. For level  $0<j\le K$ and a state  $q \in R_j$, let $(a,\sense) \in \pi_1^\ast(q)$ such that $\post_{\cal G}(q,(a,\sense))\in R_{j-1}$. Because  taking the action $(a,\sense)$ has a nonzero probability,  then the level will strictly decrease with a \emph{positive} probability. In addition, with probability one, the game stays within $\asw_1$ for any action in $\pi_1^\ast(q)$ and its probabilistic outcomes.  Then, let $E_n$ be the event that ``Reaching $R_{j-1}$ from a state in  $R_j$ in $n$ steps.''  It holds that $\lim_{n\rightarrow \infty} P(E_n)  = 1$% \footnote{This argument can be shown using a simple one-dimensional random walk on nonnegative integer line with a lower bound $0$. At each step, the walk moves left and right with equal probabilities unless it is at $0$, where it can either stays $0$ or moves right. It can be shown, starting from 0, every positive integer can be reached with probability one. }
. Thus, by \edit{repeating}  the same argument for $j=K,K-1,\ldots, 1$, eventually, $R_0=\{q_F\}$ will be reached with probability one.
\end{proof}
\begin{remark}
%It is worth noting that if P1 has perfect observation, then at a state $q\in \asw_1 \cap R_k$, P1 should only take an action $(a,\sense)$ with which the rank is strictly reducing with a probability $> 0$. Another state $q'\in \asw_1\cap R_j$, P1 should take action $(a',\sense')\ne (a,\sense)$ if she is to reduce the rank with a positive probability. When $q$ and $q'$ share the same belief, P1 cannot distinguish the two.  Thus, taking both $(a,\sense), (a',\sense')$ with nonzero probabilities can ensure the rank from the current state to be reduced with a positive probability, regardless it is $q$ or $q'$.

%Also 
Note that in computation, P2's strategy is not restricted to be belief-based. Therefore, for any state $q\in \asw_1$, P1 can ensure almost-sure winning regardless of P2's strategy given P2's perfect observation. 
% \ak{It is possible that reviewer will ask for proof of second statement. Consider removing it.}
\end{remark}

% \begin{remark}
% We studied one-sided (P1)'s imperfect information. For two-sided partial observations, if P1 and P2 share the same observations from uncompromised sensors, then one can employ subset construction to obtain a reachability game with two-sided perfect observation, where the state is the belief of P1 (and same as P2). Solutions of two-player reachability games with perfect observations  \cite{bertrandQualitativeDeterminacyDecidability2009,bloemGraphGamesReactive2018}  can be then employed to solve P1's attack-aware \ac{asw} strategy. \ak{move to conclusion?}
% \end{remark}

% \ak{Can we say anything about comparison of $|\win_1|$ in naive and attack-aware case? } 

% \jie{Yes. that would need some analysis here. thoughts?}

% \todo[inline]{remark: Adding a remark to highlight that if P2 has the same information as P1, then we can use a subset construction to reduce the game into a perfect observation game. }

% \subsection{The game perceived by P2}
% The attacker P2, unlike P1 has perfect observation and is also has the knowledge of the P1's misinterpretation of the sensor failure. 
\noindent \textbf{Complexity analysis: }
%The  time complexity for solving P1's almost-sure winning strategy is $Q(Q_1\cdot(A\cdot\Sigma)+Q_2\cdot\mathcal{B}+Q_N)$. % Thus, 
% The algorithm has a polynomial complexity in the size of $\mathcal{G}$ and exponential in the original game $G$ due to the belief construction.
\edit{The time complexity for solving P1's \ac{asw} belief-based strategy in $\calG$ is $\calO(|Q|(|Q_1|\cdot(|A|\cdot|\Sigma|)+\abs{Q_2}\cdot|\mathcal{B}|+|Q_N|))$. %\editcheck{addressed} \sh{What does the notation $Q_2\cdot\mathcal{B}$ mean? Is it the Cartesian product of two sets or the product of the cardinalities of the two sets?}
%Thus, 
%the algorithm has a polynomial complexity in the size of $\mathcal{G}$ with respect to the states in the constructed game. 
%The construction from the original game $G$ to $\calG$ has a space complexity $??$
In terms of the original game, we have the complexity to be $\calO(2^{|S|} \cdot |A| \cdot (|\Sigma| + |\mathcal{B}|))$ due to the subset construction for beliefs. The complexity matches the lower bound for one-sided partial information games  \cite{chatterjeePartialObservationStochasticGames2012}. }
% \aknote{Check.}
% \jf{The notation $Q$ in the complexity shall be replaced with big O? I don't think this is correct:  $\calO(2^{2S} \times A \times (2\Sigma + \mathcal{B}))$. the paragraph needs to be rewritten.}
% \ak{subset construction}

\section{An illustrative example}
\label{sec:Example}

% \aknote{Special Case: In Def. 1, when we have a unique sensor corresponding to each state, the only sensing action is to query all the sensors and P2 has no attack actions, the game $G$ reduces to a perfect observation MDP. For this special case, we expect ASW strategy synthesized by Alg. 2 to be identical to ASW strategy in MDP using known algorithms. } % We start by introducing an example to illustrate the advantage of sensor actions and the effect of sensor attacks. 

% \aknote{Reviewer 4: what is P2 strategy?}
%\sh{Here is a high-level comment on the organization of this section: You mentioned in the response letter that ``the example in Section IV is carefully constructed to highlight the difference between a) attack-aware defense and attack-unaware defense; b) the system performance with attack and without attack; c) the system performance given attackers with different capabilities (restricted attack case).'' However, the structure of this section does not make this obvious --- where exactly are the three comparisons made?}\su{I have tried to highlight the part where the comparison starts by naming the cases in bold.}
In this section, we present \edit{an example} to illustrate the proposed algorithm. Consider the \ac{mdp} shown in Fig.~\ref{fig:example_base_game},
P1 has 5 sensors, $A$, $B$, $C$, $D$, and $E$ covering the states $\{s_0,s_1\}$, $\{s_1,s_2\}$, $\{s_0,s_2,s_3\}$, $\{s_4,s_5\}$, and $\{s_2,s_6, s_7\}$ respectively and four control actions   $\{a_0,a_1,a_2,a_3\}$ with probabilistic outcomes. P1 has  four sensor query actions $\sigma_0$, $\sigma_1$, $\sigma_2$, and $\sigma_3$ which query the sensors $\{A,B\}$, $\{A,C\}$, $\{B\}$, and $\{B,E\}$ respectively. P1's goal is to reach $s_4$. %and P2 is to prevent P1 from reaching the goal state.
% \editcheck{by attacking any one of the $5$ sensors at a time ($\beta_i$ represents the $i^{th}$ sensor being attacked)}.

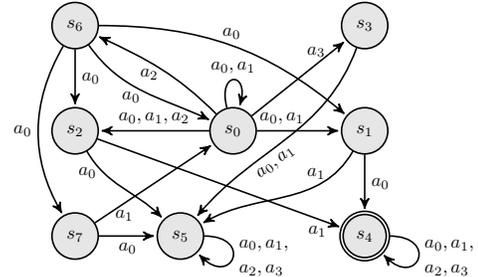
\begin{figure}[ht!]
        \centering       \vspace{-1ex}
       \begin{tikzpicture}[->,>=stealth',shorten >=1pt,auto,node distance=2.5cm,scale=0.7,semithick, transform shape,square/.style={rectangle}]
    % square/.style={regular polygon,regular polygon sides=4}]
		\tikzstyle{every state}=[fill=black!10!white];
% 		\node[state, accepting, square, draw]      (0)                           {\Large$(s_0, X_2)$};

  % Configuration 1
%   \node[state] (0) {$s_0$};
%   \node[state] (2) [left of=0] {$s_2$};
%   \node[state] (3) [ above right of=0, node distance =2 cm] {$s_3$};
%   \node[state] (1) [right of=0] {$s_1$};
%   \node[state, accepting] (4) [right of=1] {$s_4$};
%   \node[state] (5) [above of=1] {$s_5$};
%   \node[state] (6) [ below  of=2] {$s_6$};
%   \node[state] (7) [below of=1] {$s_7$};

    % Configuration 2
    \node[state] (0) at (0, 0) {$s_0$};
    \node[state] (2) at (-3, 0) {$s_2$};
    \node[state] (3) at (2.5, 2) {$s_3$};
    \node[state] (1) at (2.5, 0) {$s_1$};
    \node[state, accepting] (4) at (2.5, -2) {$s_4$};
    \node[state] (5) at (-1, -2) {$s_5$};
    \node[state] (6) at (-3, 2) {$s_6$};
    \node[state] (7) at (-3, -2) {$s_7$};

  \path (0) edge[loop above]  node{$a_0, a_1$} (0)
        (0) edge   node[pos=0.3]{$a_0, a_1$} (1)
        (0) edge   node[above]{$a_0, a_1,a_2$} (2)
        (0) edge   node[left,pos=0.85]{$a_3$} (3)
        (0) edge [out=135,in=335]  node[below,pos=0.6]{$a_2$} (6)
        (1) edge  node{$a_0$} (4)
        (1) edge [out=240,in=30] node[above,pos=0.3]{$a_1$} (5)
        (2) edge  node[below,pos=0.9]{$a_1$} (4)
        (2) edge [out=300,in=135]  node[left,pos=0.25]{$a_0$} (5)
        (3) edge  [out=250,in=50]  node[below,pos=0.6,sloped]{$a_0, a_1$} (5)
        (4) edge[loop right, out=0, in=-45,looseness=7]  node[below right]{$a_2, a_3$} node[above right]{$a_0, a_1,$} (4)
        (5) edge[loop right, out=0, in=-45,looseness=7]  node[below right]{$a_2, a_3$} node[above right]{$a_0, a_1,$} (5)
        (6) edge [out=0,in=140] node[above]{$a_0$} (1)
        (6) edge [out=305,in=155] node[below,pos=0.4]{$a_0$} (0)
        (6) edge  node{$a_0$} (2)
        (6) edge [bend right]  node[left]{$a_0$} (7)
        (7) edge   node[below,pos=0.25]{$a_1$} (0)
        (7) edge   node[below,pos=0.5]{$a_0$} (5)
        ;

\end{tikzpicture}
       %\vspace{-4em}
        \caption{An example for attack-aware planning. We omit  exact transition probabilities and indicate the possible outcomes for each state-action pair. 
        % \deleted{For example, from $s_0$, with action $a_0$, the set of possible next state is   $\{s_0,s_2,s_6,s_1\}$.}
        }
      \label{fig:example_base_game}
      %\vspace{-1ex}
\end{figure}

\begin{figure}[ht!]
        \centering
        %\vspace{-2ex}
      %    \includegraphics[scale=0.75, trim={0 5.5cm 0 0},clip]{figures/fig-ex-G2.png}
       % \begin{tikzpicture}[->,>=stealth',shorten >=1pt,auto,node distance=2.5cm, scale =0.8,transform shape]
\begin{tikzpicture}[->,>=stealth',shorten >=1pt,auto,node distance=2.3cm,scale=.65,semithick, transform shape,square/.style={rectangle}]
    % square/.style={regular polygon,regular polygon sides=4}]
		\tikzstyle{every state}=[fill=black!10!white];
% 		\node[state, accepting, square, draw]      (0)                           {$(s_0, X_2)$};

  % Configuration 1
  \node[state,ellipse] (0) 
    at (0, 0) 
    {$(s_6,\{s_6\})$};
  
  \node[state, diamond, aspect=3] (1)
    at (5, 0) % [right of=0, node distance=4cm] 
    {$(s_6,\{s_0,s_1,s_2,s_7\},a_0,\sigma_1)$};

  \node[state, rectangle] (3) 
    at (0, 2) % [above of=0, node distance=2cm] 
    {$(s_0,\{s_0,s_1,s_2,s_7\},\sigma_1)$};
  
  \node[state, rectangle] (2) 
    at (4, 2)% [above of=1, node distance=2cm] 
    {$(s_1,\{s_0,s_1,s_2,s_7\},\sigma_1)$};
    
  \node[state, rectangle] (4) 
    at (0, -2)% [below of=0, node distance=2cm] 
    {$(s_2,\{s_0,s_1,s_2,s_7\},\sigma_1)$};
    
  \node[state, rectangle] (5) 
    at (4, -2)% [below of=1, node distance=2cm] 
    {$(s_7,\{s_0,s_1,s_2,s_7\},\sigma_1)$};
    
  \node[state,ellipse] (6) 
    at (9, 0.75)% [right of=2, node distance=5cm] 
    {$(s_1,\{s_1,s_7\})$};
  
  \node[state,ellipse] (7) 
    at (9, 2)% [above right of=1, node distance=4.5cm] 
    {$(s_1,\{s_0,s_1\})$};
    
  \node[state,ellipse] (8) 
    at (9, -2)% [below right of=1, node distance=4.5cm] 
    {$(s_7,\{s_2,s_7\})$};
    
  \node[state,ellipse] (9) 
    at (9, -0.75)% [right of=5, node distance=5cm] 
    {$(s_7,\{s_1,s_7\})$};
%   \node[state] (10) at (9,2) {$(9, 9)$};
%   \node[state] (10) [right of=3, node distance =4 cm] {$(s_1,\{1\})$};
%   \node[state] (11) [below right of=3, node distance =4 cm] {$(s_1,\{0,1\})$};
%   \node[state] (12) [above right of=4, node distance =4 cm] {$(s_0,\{0,1\})$};
%   \node[state] (13) [right of=4, node distance =4 cm] {$(s_0,\{0\})$};
%   \node[state] (14) [below right of=4, node distance =4 cm] {$(s_0,\{0,2\})$};
%   \node[state] (15) [above right of=5, node distance =4 cm] {$(s_7,\{7\})$};
%   \node[state] (16) [right of=5, node distance =4 cm] {$(s_7,\{2,7\})$};
%   \node[state] (17) [below right of=5, node distance =4 cm] {$(s_7,\{1,7\})$};

%   \node[state] (2) [right of=0] {$s'$};
%   \node[state, accepting] (3) [above of=0] {$f$};
%   \node[state] (4) [below of=2] {$s''$};
%   \node [container,fit=(0) (2) (3) (4) (5),draw=blue, dashed,line width=0.2mm] (container) {};
%   \node [container,fit=(0) (3) (4) (5) (6),draw=red, dashed,line width=0.2mm] (container) {};
%   \node [container,fit=(0) (2) (3) (7) (6),draw=red, dashed,line width=0.2mm] (container) {};
  
  \path (0) edge node{$(a_0, \sigma_1)$} (1)
        (1) edge [dashed]  node{} (2)
        (1) edge [dashed]  node{}(3)
        (1) edge [dashed]  node{}(4)
        (1) edge [dashed]  node{} (5)
        (2) edge node{$A$} (6)
        (2) edge  node{$C$} (7)
%         (2) edge node{$\beta_2$} (8)
%         (3) edge  node{$\beta_0$} (9)
%         (3) edge   node{$\beta_3, \beta_1$} (10)
%         (3) edge  node{$\beta_2$} (11)
%         (4) edge  node{$\beta_2$} (12)
%         (4) edge node{$\beta_1, \beta_3$} (13)
%         (4) edge  node{$\beta_0$} (14)
%         (5) edge  node{$\beta_3,\beta_1$} (15)
        (5) edge node{$C$} (8)
        (5) edge node{$A$} (9)

%   \path (0) edge[loop above] node{$a(0)$}
%         (0) edge   node {$b$} (3)
%         (2) edge [loop right]  node {$b$} (2)
%         (2) edge  n)
%         %(3) edge[bend left]  node {$0.5$} (2)
%         (3) edge [loop above] node {$a,a_1$} (3)
        
%         (4) edge              node {$a$} (0)
%         (4) edge  node {$b$} (2)
        
        % (6) edge              node {$(b,\sense)$} (8)
        % (7) edge   node {$1$} (9)
        
        % (8) edge[bend left]              node {} (5)
        %(7) edge[bend left]              node{$0.5$} (6)

  ;

\end{tikzpicture}
       %\vspace{-3em}
        \caption{A fragment of the augmented game $\mathcal{G}$. P1's states are ellipses, P2's states are rectangles, and the nature player's states are diamonds.}
      \label{fig:illustration_of_run}      %\vspace{-1ex}

 \end{figure}
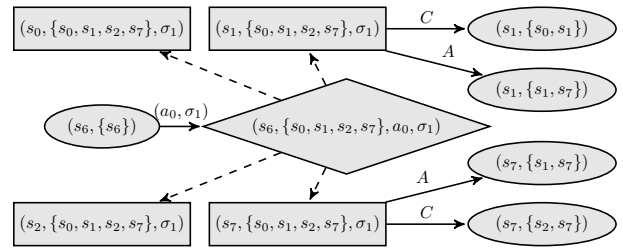
\edit{Figure~\ref{fig:illustration_of_run} is a fragment of game $\mathcal{G}$ augmented with beliefs. Starting with P1's state $(s_6, \{s_6\})$, P1 takes action $a_0$ and    queries sensor $\{A,C\}$ with action $\sigma_1$. The next state is a nature state $(s_6, \{s_0,s_1,s_2,s_7\}, a_0,\sigma_1)$ where the belief is updated to $\post_G(\{s_6\},a_0)= \{s_0,s_1,s_2,s_7\} $   which are possible next states   given the action $a_0$ taken at $s_6$. Then, the nature player randomly selects one of the states, say $s_1$, and arrives at $(s_1,\{s_0,s_1,s_2,s_7\}, \sigma_1)$. Note that if there is no sensor attack, P1 should obtain $\mathsf{True}$ for sensor $A$ and $\mathsf{False}$ for sensor $C$ and deduce the current state is $s_1$. However, P2 attacks sensor $C$ so that P1 only receives reading from $A$ and deduces the current state can either be $s_0$ or $s_1$---resulting in P1's state $(s_1,\{s_0,s_1\})$. 
}

 We use three  different variations of the example to highlight the system performance given attackers with different capabilities: \textbf{Case 1:} No attack:  P2 has no sensor attack actions. In this case, P1 planning with joint control and sensing actions;  
\textbf{Case 2:} Restricted Attack:  each time P2 can attack  one of the sensors from $\{B,E\}$. \textbf{Case 3:} Unrestricted Attack: each time P2 can attack any one of the sensors. 
Assuming P1 knows the initial state, 
  under three cases, the sets of initial states from which P1 has an \ac{asw} strategy are shown in Table~\ref{tab:1}. \edit{The strategies for each of these cases were computed in $4.3$~s, $6.5$~s and $14.1$~s respectively on a laptop with AMD RYZEN $9$ processor and $16$~GB of RAM.}  %\su{\editcheck{addressed}} \sh{I suppose P1's strategies were computed? This is not clear from the context. You may want to mention the actual computation time so that the reviewers won't be too nervous about the high theoretical complexity.}
  
\begin{table}[h!]      %\vspace{-1ex}

     \caption{Summary of   three cases}
    %\vspace{-3ex}
    \label{tab:1}
  %  \begin{tabularx}{\linewidth}{|l|>{\raggedright\arraybackslash}X|}
 \setlength\tabcolsep{0pt}
        \begin{tabular}{|l|c|}
        \hline

Case& Almost-sure winning initial states \\
\hline
% No Attack &$(s_0,\{s_0\}):(a_0,\sigma_{0,1,2,3}),(a_1,\sigma_{0,1,2,3}),(a_2,\sigma_{0,1,2,3})$,
% $(s_1,\{s_1\}):(a_0,\sigma_{0,1,2,3})$,$(s_2,\{s_2\}):(a_1,\sigma_{0,1,2,3})$,$(s_6,\{s_6\}):(a_0,\sigma_{0}),(a_0,\sigma_1),(a_0,\sigma_3)$,$(s_7,\{s_7\}):(a_1,\sigma_{0,1,2,3})$  \\
1. No Attack \& 2. Restricted Attack &$s_0$, $s_1$, $s_2$, $s_6$, $s_7$  \\ \hline
% Restricted Attack &$(s_0,\{s_0\}):(a_0,\sigma_{0,1,2}),(a_1,\sigma_{0,1,2}),(a_2,\sigma_{1,2})$,
% $(s_1,\{s_1\}):(a_0,\sigma_{0,1,2,3})$,$(s_2,\{s_2\}):(a_1,\sigma_{0,1,2,3})$,$(s_6,\{s_6\}):(a_0,\sigma_{0}),(a_0,\sigma_1)$,$(s_7,\{s_7\}):(a_1,\sigma_{0,1,2,3})$ \\
%2. Restricted Attack &$s_0$, $s_1$, $s_2$, $s_6$, $s_7$  \\ \hline

% Unrestricted Attack &$(s_0,\{s_0\}):(a_0,\sigma_{1}),(a_1,\sigma_{1})$,
% $(s_1,\{s_1\}):(a_0,\sigma_{0,1,2,3})$,$(s_2,\{s_2\}):(a_1,\sigma_{0,1,2,3})$,$\{s_7\}):(a_1,\sigma_{0,1,2,3})$ \\

3. Unrestricted Attack &$s_0$, $s_1$, $s_2$, $s_7$ \\

\hline
    \end{tabular}
    \vspace{-3ex}
 \end{table}
  Note that starting from $s_6$, P1 has %\sh{I thought P1's goal was to reach $s_4$ as described in the first paragraph.} 
    \ac{asw} strategies for reaching $\{s_4\}$ in Cases 1 and 2, but not in Case 3. Consider Case 1 (no attack), from   state $s_6$, P1 only has action $a_0$ and thus, can reach one of $s_0, s_1, s_2$ or $s_7$ with some positive probability.   P1's \ac{asw} strategy assigns the following actions with nonzero probabilities: $\{(a_0,\sigma_0),(a_0,\sigma_1),(a_0,\sigma_3)\}$. Action $(a_0,\sigma_2)$ is not allowed because,   with some positive probability the next state is $s_1$ and P1 refines her belief given the sensor information to $\{s_1,s_7\}$ where P1 has no actions to ensure reaching $s_4$: In the state $s_1$, $a_0$ reaches $s_4$ and $a_1$ reaches a sink state $s_5$. However,  at $s_7$,  it is the other way around. Similar statement holds if the state $s_7$ is reached. As P1 must choose between $a_0$ and $a_1$ at $(s_1,\{s_1,s_7\})$ and $(s_7, \{s_1,s_7\})$ and there is no belief-based \ac{asw} strategy to reach $s_4$ from these two states.% is reached with probability one .

Though the set of winning initial  states are the same for Case 1 and 2. The winning strategies for P1 are different. In Case 2,  P1's winning actions at $(s_6,\{s_6\})$ are $(a_0,\sigma_0)$ and $(a_0,\sigma_1)$, excluding  action $(a_0, \sigma_3)$ which was a winning action in case 1. The reason is as follows: Suppose $(s_0,\sigma_3) $ is taken, with a positive probability, the game reaches a P2's state $(s_2,\{s_0,s_1,s_2,s_7\},\sigma_3)$. P2  attacks sensor $E$ and results in  P1's state $(s_2,\{s_1,s_2\})$. Given P1's  belief  $\{s_1,s_2\}$, the action $a_0$ is winning for   state $s_1$ but is losing for   state $s_2$. Thus, P1 does not have an action at  $(s_2,\{s_1,s_2\})$ and $(s_1,\{s_1,s_2\})$ to ensure reaching $s_4$ with probability one. Thus, action $(a_0,\sigma_3)$ is not an action from P1's almost-sure winning strategy under P2's restricted attack.

Finally, in Case 3, state $s_6$ is no longer in $\asw_1$. Consider the  two actions $(a_0,\sigma_0)$ and $(a_0,\sigma_1)$ allowed by the winning strategy at $(s_6,\{s_6\})$ for Case 2. With the action $(a_0,\sigma_0)$,  the game transitions, with a positive probability,  to  P2 state $(s_1,\{s_0,s_1,s_2,s_7\},\sigma_0)$, which is not in $\asw_1$ as P2 can drive the game into the P1 state $(s_1,\{s_1,s_2\})$ by attacking   sensor $A$. Consider the action $(a_0,\sigma_1)$, with a positive probability, the game reaches P2 state $(s_7,\{s_0,s_1,s_2,s_7\},\sigma_1)$, which is not in $\asw_1$ as P2 can drive the game to reach $(s_7,\{s_1,s_7\})$ by attacking   sensor $A$. Starting from $s_6$, P1 has no strategy to reach $s_4$  if P2 can attack any sensor.

%We summarize the results for this example in Table \ref{tab:1} given below. Here, we consider the initial states of P1 to compare the results and see their effects in the different cases.

\section{Conclusion and future work}
\label{sec:Conclusion} 
In this work, we studied  qualitative planning of control and active information acquisition given adversarial sensor attacks and presented a method to synthesize an observation-based strategy, for the attack-aware defender, to satisfy a reachability objective with probability one, under the worst case sensor attacks on its observations.  With the formal-method based modeling framework and solution approach, a number of future directions are considered: 1)  Strategic sensor placement can be studied to ensure the existence of an attack-aware almost-sure winning strategy. 2) The synthesis of P1's strategies can be analyzed given other asymmetric information structures between P1 and P2, including concurrent game interactions, and two-sided partial observations. \edit{3) Symbolic approaches (c.f. \cite{chatterjeePartialObservationStochasticGames2012}) for solving POSGs may be investigated  to avoid explicit exponential subset construction.} % \sh{Did any of the reviewers comment on the future work section? 1) I feel that too many directions are being listed. 2) I prefer not to use ``we will'' when presenting future directions; everyone should be welcome to explore those directions.}\editcheck{JF: addressed}

\bibliographystyle{IEEEtran}
\bibliography{refs}

\end{document}